\documentclass[10pt]{amsart}
\usepackage{amsmath, latexsym, amsthm, amsfonts,bm,amssymb} 
\usepackage{appendix}
\usepackage{natbib} 
\usepackage{url} 
\usepackage{enumerate}
\usepackage{mathtools} 

\bibpunct{(}{)}{;}{a}{}{,} 

\theoremstyle{plain}
\newtheorem{thm}{Theorem}

\newtheorem{lemma}{Lemma}

\theoremstyle{remark}
\newtheorem{rem}{Remark}
\newtheorem{exam}{Example}



\def\ex{{\rm {\mathbb E\,}}}

\newcommand{\pp}{\mathbb{P}}

\newcommand{\rr}{\mathbb{R}}

\allowdisplaybreaks

\begin{document}

\title[Non-parametric estimation for Poisson point processes]{A note on non-parametric Bayesian estimation for Poisson point processes}

\author{Shota Gugushvili}
\address{Mathematical Institute\\
Leiden University\\
P.O.\ Box 9512\\
2300 RA Leiden\\
The Netherlands}
\email{shota.gugushvili@math.leidenuniv.nl}

\author{Peter Spreij}
\address{Korteweg-de Vries Institute for Mathematics\\
Universiteit van Amsterdam\\
P.O.\ Box 94248\\
1090 GE Amsterdam\\
The Netherlands}
\email{spreij@uva.nl}

\thanks{The research leading to these results has received funding from the European Research Council under ERC Grant Agreement 320637. Support by The Netherlands Organisation for Scientific Research (NWO) is likewise acknowledged.}

\subjclass[2000]{Primary: 62G20, Secondary: 62M30}

\keywords{Intensity function; Non-parametric Bayesian estimation; Poisson point process; Posterior contractaion rate}

\begin{abstract}

We derive the posterior contraction rate for non-parametric Bayesian estimation of the intensity function of a Poisson point process.

\end{abstract}

\date{\today}

\maketitle

\section{Introduction}
\label{intro}
Poisson point processes (see e.g. \cite{kingman93}) are among the basic modelling tools in areas as different as astronomy, biology, image analysis, reliability theory, medicine, physics, and others. A Poisson point process $X$ on the compact metric space $\mathcal{X}$ with the Borel $\sigma$-field $\mathcal{B}(\mathcal{X})$ of its subsets is a random integer-valued measure on $\mathcal{X}$ (we assume the underlying (complete) probability space $(\Omega,\mathcal{F},\mathbb{Q})$ in the background), such that
\begin{enumerate}
\item for any disjoint subsets $B_1,B_2,\ldots,B_m\in\mathcal{B}(\mathcal{X}),$ the random variables $X(B_1),X(B_2),\ldots,X(B_m)$ are independent, and
\item for any $B\in\mathcal{B}(\mathcal{X}),$ the random variable $X(B)$ is Poisson distributed with parameter $\Lambda(B),$ where $\Lambda$ is a finite measure on $(\mathcal{X},\mathcal{B}(\mathcal{X})),$ called the intensity measure of the process $X.$
\end{enumerate}
Intuitively, the process $X$ can be thought of as random scattering of points in $\mathcal{X},$ where scattering occurs in a special way determined by properties (i)--(ii) above.

In practical applications knowledge of the intensity $\Lambda$ is of importance. The latter typically cannot be assumed known beforehand and has to be estimated based on the observational data on the process $X.$ A popular assumption in the literature (see e.g.\ pp.~96--97 in \cite{karr86}, or the references on p.~263 in \cite{kutoyants98}) is that one has independent observations $X_1,\ldots,X_n$ on the process $X$ over $\mathcal{X}$ at his disposal, on basis of which an estimator of $\Lambda$ has to be constructed. We will denote for brevity $X^{(n)}=(X_1,X_2,\ldots,X_n).$ In case $\Lambda$ is absolutely continuous with respect to some dominating measure $\mu$ and has a density $\lambda,$ one might also be interested in estimation of $\lambda.$ We will assume that $\mu$ is a finite measure on $\mathcal{X},$ so that without loss of generality it can be taken to be a probability measure, and we will call $\lambda$ the intensity function.

From now on we concentrate on estimation of the intensity function. Two broad approaches to estimation of $\lambda,$ parametric and non-parametric, can be discerned in the literature. In the parametric approach, one assumes that the unknown intensity function $\lambda$ can be parametrised by a finite-dimensional parameter $\theta$ (where, for instance, $\theta$ ranges in some subset $\Theta$ of $\mathbb{R}^p$), so that $\lambda=\lambda_{\theta},$ and the corresponding statistical experiment generated by $X^{(n)}$ is denoted by $(\mathcal{X}^{n},\mathcal{B}(\mathcal{X}^n),\{\mathbb{P}_{\theta}^{(n)},\theta\in\Theta\}).$ The goal is to estimate the `true' parameter $\theta_0$ on the basis of the sample $X^{(n)}.$ In the non-parametric approach to estimation of $\lambda$ no such assumptions are made. Instead, one assumes, for instance, that $\lambda$ belongs to some class $\Theta$ of functions possessing given smoothness properties (the statistical experiment generated by $X^{(n)}$ is $(\mathcal{X}^{n},\mathcal{B}(\mathcal{X}^n),\{\mathbb{P}_{\lambda}^{(n)},\lambda\in\Theta\})$), and the goal is to estimate the `true' intensity function $\lambda_0.$ 

In this note we are interested in non-parametric estimation of the intensity function $\lambda_0.$ In the particular case $\mathcal{X}=[0,1]^d$ a kernel-type estimator of $\lambda_0$ has been studied in detail in Section 6.2 in \cite{kutoyants98}; see also p.~263 there for further references, as well as \cite{diggle85}, where a practical implementation is discussed in the univariate setup in a closely related problem of estimation of the intensity of a stationary Cox process based on a single realisation of the process over the interval $[0,T]$ (see e.g.\ Chapter 6 in \cite{kingman93} or Chapter 1 in \cite{karr86} for Cox processes). In particular, it is shown in \cite{kutoyants98} that the kernel estimator is optimal in the minimax sense over the class of $\beta$-H\"older-regular intensity functions.

Here we will take an alternative, non-parametric Bayesian approach to estimation of $\lambda_0.$ We note that a non-parametric Bayesian approach to estimation of the intensity measure $\Lambda$ of a Poisson point process was studied in \cite{lo82}. Papers dealing with non-parametric Bayesian intensity function estimation include, among others, \cite{adams09}, \cite{arjas98} and \cite{moller98}, and concentrate primarily on computational aspects; an extensive bibliography is given in \cite{gugu18}. Advantages of a non-parametric Bayesian approach over the classical kernel method are succintly summarised in the discussion given in \cite{adams09}. One obvious drawback of the kernel estimator from \cite{kutoyants98} is that it is inconsistent on the boundary of the set $\mathcal{X}=[0,1]^d$ on which the process is defined, and in practice the estimator will also behave poorly in the regions close to the boundary (this has to do with the well-known boundary bias problem of the kernel estimator). Simulation examples in \cite{adams09} demonstrate that even after the correction for edge effects following the method in \cite{diggle85} and with an optimal choice of the bandwidth parameter, the kernel method is still  outperformed by various Bayesian approaches. Secondly, if a kernel of order higher than two is used, the kernel method will not even yield a necessarily non-negative estimate of the intensity function, while if one restricts attention to e.g.\ a normal kernel, the resulting estimator will have a sub-optimal convergence rate for functions that are smoother than twice differentiable functions. The non-parametric Bayesian approach we advocate does not suffer from these drawbacks. Another often highlighted problem with kernel estimators is practical bandwidth selection, which is especially, at times hopelessly complicated for multivariate functions.

In the Bayesian approach to estimation of $\lambda_0$ one puts a prior $\Pi$ on $\lambda_0,$ which might be thought of as reflecting one's prior knowledge or belief in $\lambda_0.$ In more formal terms this is a measure $\Pi$ defined on the parameter set $\Theta$ equipped with a $\sigma$-field $\sigma(\Theta),$ and one assumes that $\lambda_0\in\Theta.$ The set $\Theta$ equipped with a certain $\sigma$-field $\sigma(\Theta)$ is a set of finite-valued functions defined on $[0,1]^d,$  which we for technical reasons assume to be uniformly bounded away from zero. Then by Proposition 6.11 in \cite{karr86} or Theorem 1.3 in \cite{kutoyants98}, for any $\lambda\in\Theta,$ the law $\mathbb{P}_{\lambda}$ of $X$ under the parameter value $\lambda$ admits a density $p_{\lambda}$ with respect to the measure $\mathbb{P}_{\mathrm{sp}}$ induced by a standard Poisson point process with intensity measure $\mathrm{d}\Lambda_{\mathrm{sp}}(x)=\mathrm{d}\mu(x).$ This density is given by
\begin{equation*}
p_{\lambda}(\xi)=\exp\left( \int_{\mathcal{X}} \log \lambda(x)\mathrm{d}\xi(x) - \int_{ \mathcal{X}  } [\lambda(x)-1] \mathrm{d}\mu(x) \right),
\end{equation*}
where $\xi=\sum_{i=1}^m \delta_{x_i}$ is a realisation of $X$ (here $\delta_{x_i}$ denotes the Dirac measure at $x_i$) and
\begin{equation*}
\int_{\mathcal{X}} \log \lambda(x)\mathrm{d}\xi(x) = \sum_{i=1}^m \log (\lambda(x_i)).
\end{equation*}
Using independence of the $X_i$'s, it follows that the likelihood $L_{\lambda}(X^{(n)})$ in $\lambda$ can be written as
\begin{equation}
\label{likelih}
L_{\lambda}(X^{(n)})=\prod_{i=1}^n \exp\left( \int_{ \mathcal{X} } \log \lambda(x)\mathrm{d}X_{i}(x) - \int_{ \mathcal{X}  } [\lambda(x)-1] \mathrm{d}\mu(x) \right).
\end{equation}
Assuming joint measurability of $p_{\lambda}(\xi)$ in $(\xi,\lambda),$ so that the integrals below make sense, Bayes' formula gives the posterior measure of any measurable set $A\in\sigma(\Theta)$ through
\begin{equation*}
\Pi(A|X^{(n)})=\frac{ \int_A L_{\lambda} (X^{(n)}) \mathrm{d}\Pi(\lambda) }{ \int_{ \Theta } L_{\lambda} (X^{(n)}) \mathrm{d}\Pi(\lambda) }.
\end{equation*}
Transition from the prior to the posterior can be thought of as updating our prior opinion on $\lambda_0$ upon seeing the data $X^{(n)}.$

In this paper we will study the rate of convergence of the posterior distribution $\Pi(\cdot|X^{(n)})$ under $\mathbb{P}_{\lambda_0}^{(n)},$ where $\mathbb{P}_{\lambda_0}^{(n)}$ denotes the law of $X^{(n)}$ under the true parameter value $\lambda_0.$ The idea, informally speaking, is that with the sample size $n$ growing indefinitely, the Bayesian approach should be able to recognise the true $\lambda_0$ with increasing accuracy. This can be formalised by requiring, for instance, that for any fixed neighbourhood $A$ of $\lambda_0,$ $\Pi(A^c|X^{(n)})\rightarrow 0$ in $\mathbb{P}_{\lambda_0}^{(n)}$-probability, or, in words, by requiring that with the Bayesian approach with a prior $\Pi,$ most of the posterior mass must eventually concentrate around the true parameter value $\lambda_0.$ More generally, one might take a sequence of shrinking neighbourhoods $A_n$ of $\lambda_0$ and ask what is the fastest rate, at which the neighbourhoods $A_n$ can shrink, while still capturing most of the posterior mass (the precise definition will be given below). The case for such an approach to the study of non-parametric Bayesian techniques is made e.g.\ in \cite{diaconis86}, while several recent references dealing with establishing posterior convergence rates under broad conditions in various statistical settings are \cite{ghosal00}, \cite{ghosal01}, \cite{ghosal07} and \cite{vdv08a}. The rate, at which the neighbourhoods $A_n$ shrink, can be thought of as an analogue of the convergence rate of a frequentist estimator. The analogy can be made precise in the sense that contraction of the posterior distribution at a certain rate implies existence of a Bayes point estimate with the same convergence rate (in the frequentist sense); see e.g.\ Theorem 2.5 in \cite{ghosal00} and the discussion on pp. 506--507 there. Given the widespread use of non-parametric Bayesian procedures for intensity function estimation (see e.g.\ the references cited above, as well as an overview paper \cite{moller07}), theoretical justification of the non-parametric Bayesian approach to inference in Poisson point processes appears timely to us.

The rest of the paper is organised as follows: in the next section we state the problem we are interested in in greater detail and provide a general result on the posterior contraction rate in our problem with the prior based on a transformation of a Gaussian process. In Section \ref{priors} we consider concrete examples of the prior and compute the posterior contraction rates explicitly. Finally, Appendix \ref{appendix} contains the proof of the technical lemma used in the proof of our main theorem.

\section{Main result}
\label{results}

In order to study the contraction rate of the posterior distribution in our setting, we first need to specify the suitable neighbourhoods $A_n$ of $\lambda_0,$ for which this will be done. The Hellinger distance $h(\mathbb{P}_{\lambda_1},\mathbb{P}_{\lambda_2})$ between two probability laws $\mathbb{P}_{\lambda_1}$ and $\mathbb{P}_{\lambda_2}$ is defined as
\begin{align*}
h(\mathbb{P}_{\lambda_1},\mathbb{P}_{\lambda_2})&=\left\{ \int ( \mathrm{d}\mathbb{P}_{\lambda_1}^{1/2} - \mathrm{d}\mathbb{P}_{\lambda_2}^{1/2} )^2 \right\}^{1/2}\\
&=\left\{ \int ( p_{\lambda_1}^{1/2} -  p_{\lambda_2}^{1/2} )^2 \mathrm{d}\mathbb{P}_{\mathrm{sp}} \right\}^{1/2}.
\end{align*}
Here, as in Section \ref{intro}, we assume that $\lambda_i$'s are bounded away from zero and infinity, which yields in particular the second equality in the above display. The Hellinger distance is one of the popular discrepancy measures between two probability laws. The Hellinger distance can also be used to define the pseudo-distance $\rho(\lambda_1,\lambda_2)$ between parameters $\lambda_1$ and $\lambda_2$ by setting
\begin{equation*}
\rho(\lambda_1,\lambda_2)=h(\mathbb{P}_{\lambda_1},\mathbb{P}_{\lambda_2}).
\end{equation*}
Thus $\lambda_1$ and $\lambda_2$ are close to each other if the corresponding laws $\mathbb{P}_{\lambda_1}$  and $\mathbb{P}_{\lambda_1}$ are in Hellinger distance. We also introduce two further discrepancy measures: the Kullback-Leibler divergence $\mathrm{KL}(\mathbb{P}_{\lambda_1},\mathbb{P}_{\lambda_2})$ between two probability laws $\mathbb{P}_{\lambda_1}$ and $\mathbb{P}_{\lambda_2}$ is defined as
\begin{align*}
\mathrm{KL}(\mathbb{P}_{\lambda_1},\mathbb{P}_{\lambda_2})&= \int \log \left(\frac{ \mathrm{d}\mathbb{P}_{\lambda_1}}{ \mathrm{d}\mathbb{P}_{\lambda_2} } \right) \mathrm{d}\mathbb{P}_{\lambda_1} \\
&=\int p_{\lambda_1} \log \left(\frac{ p_{\lambda_1}}{ p_{\lambda_2} } \right)  \mathrm{d}\mathbb{P}_{\mathrm{sp}} ,
\end{align*}
while the discrepancy $\mathrm{V}$ is defined through
\begin{align*}
\mathrm{V}(\mathbb{P}_{\lambda_1},\mathbb{P}_{\lambda_2})&= \int \left( \log \left(\frac{ \mathrm{d}\mathbb{P}_{\lambda_1}}{ \mathrm{d}\mathbb{P}_{\lambda_2} } \right) - \mathrm{KL}(\mathbb{P}_{\lambda_1},\mathbb{P}_{\lambda_2}) \right)^2 \mathrm{d}\mathbb{P}_{\lambda_1} \\
&=\int p_{\lambda_1} \left( \log \left(\frac{ p_{\lambda_1}}{ p_{\lambda_2} } \right) - \mathrm{KL}(\mathbb{P}_{\lambda_1},\mathbb{P}_{\lambda_2}) \right)^2 \mathrm{d}\mathbb{P}_{\mathrm{sp}}.
\end{align*}
This can be thought of as the Kullback-Leibler `covariance'. Both quantities are well-defined, because under our standing assumption that $\lambda_1$ and $\lambda_2$ are bounded away from zero, the corresponding laws $\mathbb{P}_{\lambda_1}$ and $\mathbb{P}_{\lambda_2}$ are equivalent.

We will derive the posterior convergence rate by taking the neighbourhoods $A_n$ of $\lambda_0$ to be balls of appropriate radii in the pseudo-distance $\rho,$ see below. This is a reasonable choice, see e.g.\ \cite{ghosal00}. In fact, utilising Lemma 1.5 in \cite{kutoyants98}, under our conditions the sets $A_n$ can be seen to be contained in fixed multiples of the $L_2(\mu)$-balls with the same radii as $A_n.$

We need to specify the prior $\Pi.$ Priors based on stochastic processes are widely used in Bayesian statistics. In particular, priors based on Gaussian processes are a popular choice both in the statistics and machine learning communities, see e.g.\ \cite{rasmussen06}, as well as \cite{vdv08a} for additional references. For our purposes, a zero-mean Gaussian process $W=(W(x):x\in\mathcal{X})$ is a collection of random variables $W(x)$ indexed by $\mathcal{X}$ and defined on the common probability space $(\widetilde{\Omega},\widetilde{\mathcal{F}},\widetilde{\mathbb{P}})$, such that the finite-dimensional distributions of $W$ are zero-mean multivariate normal distributions. The latter are determined by the covariance function $K:\mathcal{X}\times\mathcal{X}\rightarrow\mathbb{R},$ defined by
\begin{equation*}
K(x,y)=\widetilde{\ex}[W(x) W(y)], \quad x,y\in\mathcal{X},
\end{equation*}
where $\widetilde{\ex}$ denotes the expectation with respect to the measure $\widetilde{\mathbb{P}}.$ For all the necessary definitions and properties of Gaussian processes with a view towards applications in non-parametric Bayesian statistics that are used in this work, see \cite{vdv08a} and \cite{vdv08b}. An introductory treatment of probabilistic properties of Gaussian processes and random elements is given in \cite{lifshits12}.

Assume that $W$ is a zero-mean Gaussian process with bounded sample paths $x\mapsto W(x)$ and let $\kappa>0$ be a fixed constant. Pick a measurable function $g:\rr\rightarrow[\kappa,\infty),$ and define the process $Z^{(W)}=\left(Z^{(W)}(x):{x\in\mathcal{X}}\right)$ through
\begin{equation}
\label{pz}
Z^{(W)}(x)=g( W(x) ), \quad x\in\mathcal{X}.
\end{equation}
Realisations of $W$ will be denoted by lowercase letters, such as $w$ and $v.$ The corresponding realisations of $Z^{(W)}$ will be denoted by $z^{(w)}$ and $z^{(v)}.$ Our prior $\Pi$ will be the law of the process $Z^{(W)},$ which implicitly defines our parameter set $\Theta.$ The only reason for using the constant $\kappa>0$ in the definition of the function $g$ is to make sample paths of $Z^{(W)}$ uniformly bounded away from zero, which allows one to avoid complications when manipulating in the proofs expressions involving the likelihood \eqref{likelih}. The constant $\kappa$ can be taken to be arbitrarily small, and in a practical implementation one can ignore it altogether and simply take $g>0.$

Sample paths of those processes $W$ that are important for applications typically possess some smoothness properties, e.g.\ are continuous, and $W$ can also be viewed as a Borel-measurable random element taking values in a Banach space $(\mathbb{B},\|\cdot\|_{\infty})$ for some $\mathbb{B}\subset\ell^{\infty}(\mathcal{X});$  cf.\ Example 2.4 in \cite{lifshits12}. By Lemma 5.1 in \cite{vdv08b}, the support of $W,$ i.e.\ the smallest closed set $\mathbb{B}_0\subset \mathbb{B},$ such that $\widetilde{\mathbb{P}}(W\in \mathbb{B}_0)=1,$ is the closure  in $\mathbb{B}$ of  the reproducing kernel Hilbert space (RKHS) $(\mathbb{H},\|\cdot\|_{\mathbb{H}})$ attached to $W.$
It can be shown that this RKHS can be identified with the completion of the set of maps
\begin{equation*}
x\mapsto \sum_{i=1}^k \alpha_i K(y_i,x)=\widetilde{\ex} [W_x H], \quad H=\sum_{i=1}^k \alpha_i W_{y_i},
\end{equation*}
under the inner product
\begin{equation*}
\langle \widetilde{\ex} [W_{\cdot} H_1], \ex [W_{\cdot} H_2] \rangle_{\mathbb{H}} = \widetilde{\ex} [H_1 H_2].
\end{equation*}
Here the $\alpha_i$'s range over $\mathbb{R}$ and $k$ ranges over $\mathbb{N}.$ The support  of the process $Z^{(W)}$ can then be described through this characterisation of the support of the process $W.$

Let $N(\varepsilon,B,f)$ denote the minimum number of balls of radius $\varepsilon$ needed to cover a subset $B$ of a metric space with metric $f.$ This is the $\varepsilon$-covering number of $B.$

Our main result is based on an application of Theorem 2.1 in \cite{ghosal01} (which is a slight modification of Theorem 2.1 in \cite{ghosal00}) and Theorem 2.1 from \cite{vdv08a}. These are provided  below for the reader's convenience in an adapted form (the statement of the theorem in \cite{ghosal01} uses a slightly different definition of the discrepancy $V,$ but the theorem is valid also with our definition, cf.\ Theorem 1 in \cite{ghosal07} and the arguments on p.~196 there).

\begin{thm}[\cite{ghosal01}]
\label{thm2.1ghosal01}
Suppose that for positive sequences $\overline{\varepsilon}_n,\widetilde{\varepsilon}_n\rightarrow 0,$ such that $n\min(\overline{\varepsilon}_n^2,\widetilde{\varepsilon}_n^2)\rightarrow\infty,$ constants $c_1,c_2,c_3,c_4>0$ and sets $\Theta_n\subset \Theta,$ we have
\begin{align}
\log N(\overline{\varepsilon}_n,\Theta_n,\rho) & \leq c_1 n\overline{\varepsilon}_n^2,\label{c1}\\
\Pi( \Theta\setminus \Theta_n ) & \leq c_3 e^{- n\widetilde{\varepsilon}_n^2 (c_2+4) }, \label{c2}\\
 \Pi\left( z^{(w)}\in\Theta: \mathrm{KL}(\pp_{\lambda_0},\pp_{z^{(w)}})\leq \widetilde{\varepsilon}_n^2 , \mathrm{V} ( \pp_{\lambda_0} , \pp_{z^{(w)}} ) \leq \widetilde{\varepsilon}_n^2 \right) & \geq c_4 e ^{-c_2 n\widetilde{\varepsilon}_n^2} \label{c3}.
\end{align}
Then, for $\varepsilon_n=\max(\overline{\varepsilon}_n,\widetilde{\varepsilon}_n)$ and a large enough constant $M>0,$ we have that
\begin{equation}
\label{postrate}
\Pi( z^{(w)}\in\Theta : \rho(\lambda_0,z^{(w)})\geq M\varepsilon_n| X^{(n)} ) \rightarrow 0
\end{equation}
in $\mathbb{P}_{\lambda_0}^{(n)}$-probability.
\end{thm}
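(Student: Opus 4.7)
The plan is to follow the testing-plus-prior-mass strategy of Ghosal, Ghosh and van der Vaart. Writing the posterior as a ratio, I would set $\varepsilon_n = \max(\overline{\varepsilon}_n,\widetilde{\varepsilon}_n)$ and $U_n = \{z^{(w)}\in\Theta : \rho(\lambda_0,z^{(w)})\geq M\varepsilon_n\}$, and for a test $\phi_n$ to be constructed I would start from
\begin{align*}
\Pi(U_n\mid X^{(n)}) &\leq \phi_n + (1-\phi_n)\Pi(\Theta\setminus\Theta_n\mid X^{(n)}) \\
&\quad + (1-\phi_n)\Pi(U_n\cap\Theta_n\mid X^{(n)}),
\end{align*}
and show each summand tends to zero in $\pp_{\lambda_0}^{(n)}$-probability.

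For the test, since $\rho$ is a Hellinger distance and Hellinger tensorises on product experiments, Le Cam--Birg\'{e} testing yields between any two fixed parameters a test whose two error probabilities are bounded by $\exp(-c n \rho(\cdot,\cdot)^2)$. I would slice $\Theta_n$ into dyadic Hellinger shells $S_j = \{\lambda\in\Theta_n : jM\varepsilon_n \leq \rho(\lambda_0,\lambda) < (j+1)M\varepsilon_n\}$, cover each with $\rho$-balls of radius $jM\varepsilon_n/2$ using \eqref{c1}, and combine the individual tests by taking a maximum. A union bound then gives $\ee_{\lambda_0}^{(n)}\phi_n \to 0$ and $\sup_{\lambda\in S_j}\ee_\lambda^{(n)}(1-\phi_n)\leq \exp(-K j^2 M^2 n\varepsilon_n^2)$ for some $K>0$ depending only on $c_1$, once $M$ is chosen large enough.

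For the denominator I would use the standard evidence bound built from \eqref{c3}. With $R_n(\lambda)=\prod_{i=1}^n p_\lambda(X_i)/p_{\lambda_0}(X_i)$ and $B_n$ the Kullback--Leibler-type ball appearing in \eqref{c3}, applying Jensen's inequality to $\log \int_{B_n} R_n(\lambda)\, d\Pi(\lambda)/\Pi(B_n)$ and Chebyshev's inequality to control its variance under $\pp_{\lambda_0}^{(n)}$ via the discrepancy $\mathrm{V}$ yields, with $\pp_{\lambda_0}^{(n)}$-probability tending to one,
\begin{equation*}
\int_\Theta R_n(\lambda)\, d\Pi(\lambda) \geq \Pi(B_n)\, e^{-2 n\widetilde{\varepsilon}_n^2} \geq c_4\, e^{-(c_2+2) n\widetilde{\varepsilon}_n^2}.
\end{equation*}
Combining this with \eqref{c2} via Fubini, the expected value of the second summand is bounded by $(c_3/c_4)\, e^{-2n\widetilde{\varepsilon}_n^2}\to 0$, while the expected value of the third is dominated by $(1/c_4)\sum_{j\geq 1} \exp(-K j^2 M^2 n\varepsilon_n^2 + (c_2+2)n\widetilde{\varepsilon}_n^2)$, which vanishes for $M$ large because $\varepsilon_n^2 \geq \widetilde{\varepsilon}_n^2$.

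The main obstacle is the calibration of constants in the testing step: the exponent $K j^2 M^2 n\varepsilon_n^2$ gained from Le Cam--Birg\'{e} has to overcome both the covering-number factor $e^{c_1 n\overline{\varepsilon}_n^2}$ from the union bound and the evidence deficit $e^{(c_2+2)n\widetilde{\varepsilon}_n^2}$ inherited from the denominator step. This is precisely what the choice $\varepsilon_n = \max(\overline{\varepsilon}_n,\widetilde{\varepsilon}_n)$ together with a sufficiently large $M$ achieves, but it hinges crucially on $\rho$ being the Hellinger distance and on the three conditions \eqref{c1}--\eqref{c3} being coupled through compatible rates.
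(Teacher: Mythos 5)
Your proposal is correct and is essentially the standard testing-plus-prior-mass argument of Ghosal, Ghosh and van der Vaart that the paper relies on; the paper itself gives no proof of this statement, quoting it as Theorem 2.1 of the cited reference (with the remark that the variance-type definition of $\mathrm{V}$ is handled exactly as you do, by Chebyshev, following Ghosal and van der Vaart 2007). Your calibration of constants --- the evidence lower bound $c_4 e^{-(c_2+2)n\widetilde{\varepsilon}_n^2}$ set against the $(c_2+4)$ in the remaining-mass condition, and the entropy factor absorbed by taking $M$ large in the shell exponent --- matches the source argument, so there is nothing further to compare.
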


\begin{rem}
\label{remrate}
Note that the posterior contraction rate $\varepsilon_n$ from Theorem \ref{thm2.1ghosal01} is not uniquely defined. If $\varepsilon_n$ is a posterior contraction rate, then so is, for instance, $(3-\sin(n))\varepsilon_n$ as well, or in fact any sequence that converges to zero at a slower rate than $\varepsilon_n.$ In general we are interested in finding the `fastest' posterior contraction rate $\varepsilon_n,$ in the sense that \eqref{postrate} holds for this $\varepsilon_n$ and there is no other sequence $\varepsilon_n^{\prime}\rightarrow 0,$ such that $\lim_{n\rightarrow\infty} \varepsilon_n^{\prime} / \varepsilon_n=0,$ for which \eqref{postrate} still holds with $\varepsilon_n$ replaced by $\varepsilon_n^{\prime}$ (and perhaps the constant $M$ replaced by another constant $M^{\prime}$). \qed
\end{rem}

The conditions of Theorem \ref{thm2.1ghosal01} merit some discussion. We restrict ourselves to heuristic reasoning only: an in-depth discussion can be found in \cite{ghosal00}. The important conditions of the theorem are \eqref{c1} and \eqref{c3}. Since the covering number can be thought of as measuring the size of the model, condition \eqref{c1} says that in order to have posterior contraction rate $\varepsilon_n,$ the model should not be too big. Furthermore, condition \eqref{c3} tells us that in order to have the posterior contraction rate $\varepsilon_n,$ the prior $\Pi$ should put some minimal mass in the Kullback-Leibler type neighbourhoods of $\lambda_0.$ Finally, condition \eqref{c2} adds some additional flexibility: for our purposes it is enough to be understood in the sense that the sets $\Theta_n$ are almost the support of the prior. This condition often allows one to avoid too stringent assumptions on the parameter set $\Theta,$ such as, for instance, its compactness.

Next we need to find effective means for checking the fact that our model and the prior satisfy the conditions of Theorem \ref{thm2.1ghosal01}. To that end we will employ Theorem 2.1 from \cite{vdv08a}. The following concept is needed in its statement: for a function ${w}_0:\mathcal{X}\rightarrow\mathbb{R}$ define the function $\phi_{{w}_0}:\mathbb{R}_{+}\rightarrow\mathbb{R}$ through
\begin{equation}
\label{cf}
\phi_{{w}_0}(\varepsilon)=\inf_{h\in\mathbb{H}:\| h - {w}_0 \|_{\infty} <\varepsilon } \frac{1}{2}\|h\|_{\mathbb{H}}^2 - \log \widetilde{\mathbb{P}}(\|W\|_{\infty}<\varepsilon).
\end{equation}
This is called the concentration function of the Gaussian process $W.$

\begin{thm}[\cite{vdv08a}]
\label{thm2.1vdv08a}
Let ${w}_0$ be contained in the support of $W.$ For any sequence of positive numbers $\hat{\varepsilon}_n>0$ satisfying
\begin{equation}
\label{phi}
\phi_{{w}_0}(\hat{\varepsilon}_n)\leq n\hat{\varepsilon}_n^2
\end{equation}
and any constant $C>1$ with $\exp(-Cn\hat{\varepsilon}_n^2)<1/2,$ there exist measurable sets $B_n\subset\mathbb{B},$ such that
\begin{align}
\log N(3\hat{\varepsilon}_n,B_n,\|\cdot\|_{\infty}) & \leq 6 C n\hat{\varepsilon}_n^2,\label{w1}\\
\widetilde{\mathbb{P}}(W\notin B_n) & \leq e^{-C n\hat{\varepsilon}_n^2},\label{w2}\\
\widetilde{\mathbb{P}} ( \| W - {w}_0 \|_{\infty} < 2 \hat{\varepsilon}_n ) & \geq e ^{-n\hat{\varepsilon}_n^2} \label{w3}.
\end{align}
\end{thm}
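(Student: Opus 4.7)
The plan is to introduce a two-parameter sieve of the classical Borell--Kuelbs--Li form and verify \eqref{w1}--\eqref{w3} in turn. Concretely, let $\mathbb{B}_1$ and $\mathbb{H}_1$ denote the closed unit balls of $(\mathbb{B},\|\cdot\|_\infty)$ and $(\mathbb{H},\|\cdot\|_{\mathbb{H}})$, and set
\[
B_n \;=\; \hat{\varepsilon}_n\,\mathbb{B}_1 \;+\; M_n\,\mathbb{H}_1,
\]
where $M_n>0$ is a radius I will calibrate so that the mass condition \eqref{w2} and the covering condition \eqref{w1} are met simultaneously; the value $M_n=-2\Phi^{-1}(e^{-Cn\hat{\varepsilon}_n^2})$ (with $\Phi$ the standard normal c.d.f., so $M_n$ is of order $\sqrt{Cn}\,\hat{\varepsilon}_n$) turns out to work.

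For the small-ball estimate \eqref{w3}, the tool is the Cameron--Martin shift formula. For any $h\in\mathbb{H}$ with $\|h-{w}_0\|_\infty<\hat{\varepsilon}_n$, the triangle inequality gives $\{\|W-h\|_\infty<\hat{\varepsilon}_n\}\subset\{\|W-{w}_0\|_\infty<2\hat{\varepsilon}_n\}$. Since the law of $W+h$ is absolutely continuous with respect to that of $W$ with density $\exp(U^h-\tfrac12\|h\|_{\mathbb{H}}^2)$, where $U^h$ is centred Gaussian with variance $\|h\|_{\mathbb{H}}^2$, a symmetrisation argument exploiting $W\stackrel{d}{=}-W$ produces the standard shift inequality $\widetilde{\mathbb{P}}(\|W-h\|_\infty<\hat{\varepsilon}_n)\geq e^{-\|h\|_{\mathbb{H}}^2/2}\,\widetilde{\mathbb{P}}(\|W\|_\infty<\hat{\varepsilon}_n)$. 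Taking the infimum over admissible $h$ converts the right-hand side into $\exp(-\phi_{{w}_0}(\hat{\varepsilon}_n))$, and the hypothesis $\phi_{{w}_0}(\hat{\varepsilon}_n)\leq n\hat{\varepsilon}_n^2$ delivers \eqref{w3}.

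For the sieve mass \eqref{w2}, I would appeal to Borell's isoperimetric inequality on Banach spaces: for every symmetric Borel set $A$ one has $\widetilde{\mathbb{P}}(A+M\mathbb{H}_1)\geq\Phi\bigl(\Phi^{-1}(\widetilde{\mathbb{P}}(A))+M\bigr)$ for all $M\geq0$. Applied with $A=\hat{\varepsilon}_n\mathbb{B}_1$, this yields $\widetilde{\mathbb{P}}(W\notin B_n)\leq 1-\Phi(\alpha_n+M_n)$, where $\alpha_n:=\Phi^{-1}(\widetilde{\mathbb{P}}(\|W\|_\infty<\hat{\varepsilon}_n))$. The concentration-function hypothesis also controls the centred small-ball probability from below via $-\log\widetilde{\mathbb{P}}(\|W\|_\infty<\hat{\varepsilon}_n)\leq\phi_{{w}_0}(\hat{\varepsilon}_n)\leq n\hat{\varepsilon}_n^2$, hence bounds $\alpha_n$ from below; the choice of $M_n$ above then makes $1-\Phi(\alpha_n+M_n)\leq e^{-Cn\hat{\varepsilon}_n^2}$.

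The entropy bound \eqref{w1} is the main technical obstacle. Covering $M_n\mathbb{H}_1$ to resolution $2\hat{\varepsilon}_n$ automatically covers $B_n=\hat{\varepsilon}_n\mathbb{B}_1+M_n\mathbb{H}_1$ to resolution $3\hat{\varepsilon}_n$ by attaching an $\hat{\varepsilon}_n\mathbb{B}_1$ ball to each centre, so the task reduces to bounding $N(2\hat{\varepsilon}_n,M_n\mathbb{H}_1,\|\cdot\|_\infty)$. The crucial input, due essentially to Kuelbs and Li, is a duality between the $\|\cdot\|_\infty$-entropy of the RKHS unit ball and the small-ball function of the form
\[
\log N(\hat{\varepsilon}_n,M_n\mathbb{H}_1,\|\cdot\|_\infty)\;\lesssim\;\Bigl(\frac{M_n}{\hat{\varepsilon}_n}\Bigr)^{2}\,\bigl(-\log\widetilde{\mathbb{P}}(\|W\|_\infty<\hat{\varepsilon}_n)\bigr),
\]
obtained by combining Borell's inequality with a packing argument on dual balls. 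Plugging in $M_n^2\asymp Cn\hat{\varepsilon}_n^2$ and the small-ball bound from the previous paragraph yields $\log N\leq 6Cn\hat{\varepsilon}_n^2$, as required. The shift inequality and Borell's inequality are classical and slot in mechanically once the sieve is in place; extracting a sharp $\|\cdot\|_\infty$-entropy bound for the RKHS ball from a small-ball estimate is the genuinely subtle step.
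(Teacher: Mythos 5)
This theorem is stated in the paper as a quotation of Theorem 2.1 of van der Vaart and van Zanten (2008a); the paper itself offers no proof, so the comparison is with the argument in that reference. Your architecture is exactly the right one and coincides with theirs: the sieve $B_n=\hat{\varepsilon}_n\mathbb{B}_1+M_n\mathbb{H}_1$ with $M_n=-2\Phi^{-1}(e^{-Cn\hat{\varepsilon}_n^2})$, Borell's isoperimetric inequality for \eqref{w2}, and the Cameron--Martin shift inequality plus the infimum over $h\in\mathbb{H}$ with $\|h-w_0\|_{\infty}<\hat{\varepsilon}_n$ for \eqref{w3} are all correct and complete as written.

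The gap is in the entropy step. The inequality you invoke,
\begin{equation*}
\log N(\hat{\varepsilon}_n,M_n\mathbb{H}_1,\|\cdot\|_{\infty})\;\lesssim\;\Bigl(\frac{M_n}{\hat{\varepsilon}_n}\Bigr)^{2}\bigl(-\log\widetilde{\mathbb{P}}(\|W\|_{\infty}<\hat{\varepsilon}_n)\bigr),
\end{equation*}
does not deliver \eqref{w1}: with $M_n^2\asymp Cn\hat{\varepsilon}_n^2$ the prefactor $(M_n/\hat{\varepsilon}_n)^2$ is of order $Cn$, and the small-ball term is only controlled by $\phi_{w_0}(\hat{\varepsilon}_n)\leq n\hat{\varepsilon}_n^2$, so the right-hand side can be as large as a constant times $n^2\hat{\varepsilon}_n^2$, far exceeding $6Cn\hat{\varepsilon}_n^2$. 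The relation that is actually needed, and that the packing argument gives directly, is \emph{additive}, not multiplicative: take a maximal $2\hat{\varepsilon}_n$-separated set $h_1,\dots,h_N$ in $M_n\mathbb{H}_1$; the balls $h_j+\hat{\varepsilon}_n\mathbb{B}_1$ are disjoint, and the same shift inequality you used for \eqref{w3} gives $\widetilde{\mathbb{P}}(W\in h_j+\hat{\varepsilon}_n\mathbb{B}_1)\geq e^{-M_n^2/2}\,\widetilde{\mathbb{P}}(\|W\|_{\infty}<\hat{\varepsilon}_n)$ for each $j$. Summing over the disjoint balls yields
\begin{equation*}
\log N(2\hat{\varepsilon}_n,M_n\mathbb{H}_1,\|\cdot\|_{\infty})\;\leq\;\frac{M_n^2}{2}-\log\widetilde{\mathbb{P}}(\|W\|_{\infty}<\hat{\varepsilon}_n)\;\leq\;4Cn\hat{\varepsilon}_n^2+n\hat{\varepsilon}_n^2\;\leq\;5Cn\hat{\varepsilon}_n^2,
\end{equation*}
using $|\Phi^{-1}(y)|\leq\sqrt{2\log(1/y)}$ for $y<1/2$ (this is where the hypothesis $e^{-Cn\hat{\varepsilon}_n^2}<1/2$ enters) and $-\log\widetilde{\mathbb{P}}(\|W\|_{\infty}<\hat{\varepsilon}_n)\leq\phi_{w_0}(\hat{\varepsilon}_n)\leq n\hat{\varepsilon}_n^2$. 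Your reduction of covering $B_n$ at scale $3\hat{\varepsilon}_n$ to covering $M_n\mathbb{H}_1$ at scale $2\hat{\varepsilon}_n$ then finishes \eqref{w1} with room to spare. So no new idea is missing --- the correct bound uses only ingredients you already have on the table --- but the quantitative form of the Kuelbs--Li-type relation you wrote down is the wrong one and would not close the argument as stated.
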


Comparing the three conditions \eqref{c1}--\eqref{c3} from Theorem \ref{thm2.1ghosal01} to the three conditions \eqref{w1}--\eqref{w3} from Theorem \ref{thm2.1vdv08a}, we see that they are of a similar type. Once we bridge the Hellinger distance, the Kullback-Leibler divergence and the divergence $\mathrm{V}$ appearing in Theorem \ref{thm2.1ghosal01} with the $\|\cdot\|_{\infty}$-distance, Theorems \ref{thm2.1ghosal01} and \ref{thm2.1vdv08a} will yield the posterior contraction rate.

The following lemma serves as the key for the purpose of bounding the divergences appearing in Theorem \ref{thm2.1ghosal01}. Its proof is found in Appendix \ref{appendix}.

\begin{lemma}
\label{lemma1}
Let $\lambda_1(x)=g(w(x))$ and $\lambda_2(x)=g(v(x))$ for $w,v\in\ell^{\infty}(\mathcal{X})$ and a Lipschitz continuous function $g:\rr\rightarrow[\kappa,\infty)$ with Lipschitz constant $\overline{g}.$ Then
\begin{enumerate}[(i)]
\item $ h(\mathbb{P}_{\lambda_1},\mathbb{P}_{\lambda_2}) \leq \frac{\overline{g}}{\sqrt{\kappa}} \| w -v \|_{\infty} ;$
\item $ \mathrm{KL}(\mathbb{P}_{\lambda_1},\mathbb{P}_{\lambda_2}) \leq \frac{\overline{g}^2}{\kappa} \| w -v \|_{\infty}^2 ;$
\item $ \mathrm{V}(\mathbb{P}_{\lambda_1},\mathbb{P}_{\lambda_2}) \leq \frac{\overline{g}^2}{\kappa} \|w-v\|_{\infty}^2\left(1+\frac{\overline{g}}{\kappa} \|w-v\|_{\infty} \right). $
\end{enumerate}
\end{lemma}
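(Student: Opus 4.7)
The plan is to exploit the explicit form of the density $p_{\lambda}$ given in \eqref{likelih} together with the Campbell-type moment identities for the Poisson point process, namely $\mathbb{E}_{\lambda_1}[\int f\,\mathrm{d}X] = \int f \lambda_1\,\mathrm{d}\mu$ and $\mathrm{Var}_{\lambda_1}(\int f\,\mathrm{d}X) = \int f^2 \lambda_1\,\mathrm{d}\mu$. Two uniform ingredients will enter at the end of every estimate: the Lipschitz bound $|\lambda_1(x) - \lambda_2(x)| = |g(w(x)) - g(v(x))| \leq \overline{g}\|w-v\|_\infty$ and the pointwise lower bound $\lambda_i \geq \kappa$. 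Because $\mu$ is a probability measure, any sup-norm bound on $|\lambda_1 - \lambda_2|$ passes without loss to the integrated version, which is the only reason one ends up with $\|w-v\|_\infty$ on the right-hand side rather than an $L^2(\mu)$-norm.

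For (i) I would start from $h^2(\mathbb{P}_{\lambda_1}, \mathbb{P}_{\lambda_2}) = 2 - 2\int \sqrt{p_{\lambda_1} p_{\lambda_2}}\,\mathrm{d}\mathbb{P}_{\mathrm{sp}}$ and observe, by direct inspection of the density formula, that $\sqrt{p_{\lambda_1} p_{\lambda_2}}$ equals the density $p_{\sqrt{\lambda_1 \lambda_2}}$ times the deterministic constant $\exp(-\frac{1}{2}\int(\sqrt{\lambda_1}-\sqrt{\lambda_2})^2\,\mathrm{d}\mu)$; since a density integrates to one, the Hellinger affinity equals exactly that constant, and $1 - e^{-x} \leq x$ yields $h^2 \leq \int(\sqrt{\lambda_1}-\sqrt{\lambda_2})^2\,\mathrm{d}\mu$. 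Rationalising $\sqrt{\lambda_1}-\sqrt{\lambda_2} = (\lambda_1-\lambda_2)/(\sqrt{\lambda_1}+\sqrt{\lambda_2})$ and bounding the denominator below by $2\sqrt{\kappa}$ delivers (i). For (ii), the log-likelihood ratio for a single realisation is $\int \log(\lambda_1/\lambda_2)\,\mathrm{d}X - \int(\lambda_1 - \lambda_2)\,\mathrm{d}\mu$, so Campbell's formula gives $\mathrm{KL}(\mathbb{P}_{\lambda_1}, \mathbb{P}_{\lambda_2}) = \int[\lambda_1 \log(\lambda_1/\lambda_2) - \lambda_1 + \lambda_2]\,\mathrm{d}\mu$. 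A second-order Taylor expansion of $t \mapsto (\lambda_2 + t)\log(1 + t/\lambda_2) - t$ around $t = 0$ (value and first derivative vanish, second derivative $1/(\lambda_2 + t) \leq 1/\kappa$) bounds the integrand pointwise by $(\lambda_1 - \lambda_2)^2/(2\kappa)$, from which (ii) is immediate.

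For (iii) the only random part of the log-likelihood ratio is the stochastic integral $\int \log(\lambda_1/\lambda_2)\,\mathrm{d}X$, and its variance under $\mathbb{P}_{\lambda_1}$ equals $\int (\log(\lambda_1/\lambda_2))^2 \lambda_1\,\mathrm{d}\mu$ by the Poisson variance formula, so this is the quantity to control. I expect this step to be the main obstacle, because the product $(\log(\lambda_1/\lambda_2))^2 \lambda_1$ has to produce both the leading $\overline{g}^2/\kappa$ term and the extra cubic correction of order $\overline{g}^3/\kappa^2 \cdot \|w-v\|_\infty^3$. I would use the one-sided bounds $\log(\lambda_1/\lambda_2) \leq (\lambda_1-\lambda_2)/\lambda_2$ and $-\log(\lambda_1/\lambda_2) \leq (\lambda_2-\lambda_1)/\lambda_1$ to obtain $(\log(\lambda_1/\lambda_2))^2 \leq (\lambda_1 - \lambda_2)^2/\min(\lambda_1, \lambda_2)^2$, and then, depending on the sign of $\lambda_1 - \lambda_2$, combine this with $\lambda_1 \leq \lambda_2 + |\lambda_1 - \lambda_2|$ and $\lambda_i \geq \kappa$ to majorise the integrand pointwise by $(\lambda_1 - \lambda_2)^2/\kappa + |\lambda_1 - \lambda_2|^3/\kappa^2$. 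Integrating against $\mu$ and invoking the Lipschitz bound gives precisely (iii).
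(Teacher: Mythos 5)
Your proposal is correct, and for parts (ii) and (iii) it follows essentially the same path as the paper: the identity $\mathrm{KL}(\mathbb{P}_{\lambda_1},\mathbb{P}_{\lambda_2})=\int[\lambda_1\log(\lambda_1/\lambda_2)-\lambda_1+\lambda_2]\,\mathrm{d}\mu$, the identity $\mathrm{V}(\mathbb{P}_{\lambda_1},\mathbb{P}_{\lambda_2})=\int\lambda_1\log^2(\lambda_1/\lambda_2)\,\mathrm{d}\mu$ (which the paper extracts by direct computation rather than by quoting the Campbell variance formula as you do — your route is cleaner), and for (iii) the same two one-sided bounds $\log(1+x)\leq x$ and $-\log(1+x)\leq -x/(1+x)$, applied according to the sign of $\lambda_1-\lambda_2$; your unified statement $\log^2(\lambda_1/\lambda_2)\leq(\lambda_1-\lambda_2)^2/\min(\lambda_1,\lambda_2)^2$ is exactly the paper's case split in compact form, and the decomposition $\lambda_1=\lambda_2+(\lambda_1-\lambda_2)$ producing the quadratic plus cubic terms is identical. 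The genuine divergence is in part (i): the paper simply invokes $h^2\leq\mathrm{KL}$ and deduces (i) from (ii), whereas you compute the Hellinger affinity exactly as $\exp\bigl(-\tfrac{1}{2}\int(\sqrt{\lambda_1}-\sqrt{\lambda_2})^2\,\mathrm{d}\mu\bigr)$ by recognising $\sqrt{p_{\lambda_1}p_{\lambda_2}}$ as a constant multiple of the density $p_{\sqrt{\lambda_1\lambda_2}}$, then use $1-e^{-x}\leq x$ and rationalisation. Your argument is slightly longer but buys a sharper constant, $h\leq\frac{\overline{g}}{2\sqrt{\kappa}}\|w-v\|_\infty$ (and your Taylor-with-remainder version of (ii) likewise gains a factor $\tfrac12$); since only the order of magnitude matters for Theorem~\ref{mainthm}, the improvement is cosmetic, but both routes are valid.
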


The following is our main result.

\begin{thm}
\label{mainthm}
Let $g:\rr\rightarrow[\kappa,\infty)$ be a Lipschitz continuous function with a Lipschitz constant $\overline{g},$ such that $g$ is invertible. Let the true intensity function $\lambda_0\geq \kappa$ be such that $w_0=g^{-1}(\lambda_0(\cdot))$ is contained in the support of the Gaussian process $W$ with continuous sample paths. Suppose the prior $\Pi$ is the law of the process $Z^{(W)}=(Z^{(W)}(x):x\in\mathcal{X})$ for $Z^{(W)}(x)=g(W(x)).$  Then for a sequence $\varepsilon_n=\hat{\varepsilon}_n\rightarrow 0$ satisfying the assumptions of Theorem \ref{thm2.1vdv08a} and a sufficiently large constant $M>0,$ the posterior distribution for $\lambda_0$ relative to the prior $\Pi$ satisfies
\begin{equation*}
\Pi(z^{(w)}\in\Theta:\rho(\lambda_0,z^{(w)})>M\varepsilon_n|X^{(n)})\rightarrow 0
\end{equation*}
in $\mathbb{P}_{\lambda_0}^{(n)}$-probability.
\end{thm}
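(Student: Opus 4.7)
The plan is to verify the three hypotheses \eqref{c1}--\eqref{c3} of Theorem \ref{thm2.1ghosal01} by applying Theorem \ref{thm2.1vdv08a} to the driving Gaussian process $W$ at the reference point $w_0=g^{-1}(\lambda_0(\cdot))$ and then pushing the resulting $\|\cdot\|_\infty$-bounds forward through the map $w\mapsto g\circ w$ by means of Lemma \ref{lemma1}.

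First I would fix a constant $C>1$, to be enlarged at the end of the argument, and apply Theorem \ref{thm2.1vdv08a} to extract sets $B_n\subset\mathbb{B}$ satisfying \eqref{w1}--\eqref{w3}. The natural sieve is then $\Theta_n=\{g\circ w:w\in B_n\}$. Since the law of $Z^{(W)}$ is the pushforward of $\widetilde{\mathbb{P}}$ under $w\mapsto g\circ w$, condition \eqref{w2} immediately gives $\Pi(\Theta\setminus\Theta_n)\le e^{-Cn\hat{\varepsilon}_n^2}$, which delivers \eqref{c2} with $\widetilde{\varepsilon}_n=\hat{\varepsilon}_n$ as soon as $C$ exceeds $c_2+4$. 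For the entropy condition \eqref{c1}, Lemma \ref{lemma1}(i) gives $\rho(z^{(w)},z^{(v)})\le (\overline{g}/\sqrt{\kappa})\|w-v\|_\infty$, so any $\|\cdot\|_\infty$-cover of $B_n$ of radius $r$ induces a $\rho$-cover of $\Theta_n$ of radius $(\overline{g}/\sqrt{\kappa})r$; taking $\overline{\varepsilon}_n$ to be a suitable fixed multiple of $\hat{\varepsilon}_n$, estimate \eqref{w1} converts directly into \eqref{c1}.

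For the Kullback--Leibler mass condition \eqref{c3}, Lemma \ref{lemma1}(ii)--(iii) shows that whenever $\|w-w_0\|_\infty<2\hat{\varepsilon}_n$, both $\mathrm{KL}(\pp_{\lambda_0},\pp_{z^{(w)}})$ and $\mathrm{V}(\pp_{\lambda_0},\pp_{z^{(w)}})$ are bounded by a fixed multiple of $\hat{\varepsilon}_n^2$ for $n$ large enough (for the $\mathrm{V}$-term the extra factor $1+(\overline{g}/\kappa)\|w-w_0\|_\infty$ is absorbed using $\hat{\varepsilon}_n\to 0$). Rescaling $\widetilde{\varepsilon}_n$ to be this fixed multiple of $\hat{\varepsilon}_n$, which is harmless in view of Remark \ref{remrate}, the small-ball event $\{\|W-w_0\|_\infty<2\hat{\varepsilon}_n\}$ lies inside the Kullback--Leibler-type neighbourhood appearing in \eqref{c3}, and \eqref{w3} supplies the required lower bound $e^{-n\hat{\varepsilon}_n^2}$ of the form $c_4 e^{-c_2 n\widetilde{\varepsilon}_n^2}$.

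Finally, choosing $C$ strictly larger than the $c_2+4$ fixed in the preceding step, I would invoke Theorem \ref{thm2.1ghosal01}. Both $\overline{\varepsilon}_n$ and $\widetilde{\varepsilon}_n$ are constant multiples of $\hat{\varepsilon}_n$, so $\varepsilon_n=\max(\overline{\varepsilon}_n,\widetilde{\varepsilon}_n)$ differs from $\hat{\varepsilon}_n$ only by a multiplicative constant that can be absorbed into $M$, yielding the stated rate. The main delicate point is the interlocking of constants between the three conditions: the constants supplied by Lemma \ref{lemma1} must be used to pin down $c_2$ first, after which $C$ can be chosen to dominate $c_2+4$ in \eqref{c2} without disturbing \eqref{c1} or \eqref{c3}; everything else amounts to routine rescaling.
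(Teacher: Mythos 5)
Your proposal is correct and follows essentially the same route as the paper: the sieve $\Theta_n=\{g\circ w:w\in B_n\}$, the transfer of the entropy bound via Lemma \ref{lemma1}(i), the inclusion of the $\|\cdot\|_\infty$-ball around $w_0$ in the Kullback--Leibler neighbourhood via Lemma \ref{lemma1}(ii)--(iii), and the absorption of the proportionality constants relating $\overline{\varepsilon}_n,\widetilde{\varepsilon}_n$ to $\hat{\varepsilon}_n$ into $M$. The constant bookkeeping you flag is handled in the paper by fixing $C$ up front so that $1/(4c_g)\le C/(4c_g)-4$ with $c_g=\overline{g}^2/\kappa+\overline{g}^3/\kappa^2$, which is legitimate since these quantities depend only on $g$ and $\kappa$; your deferred choice of $C$ amounts to the same thing.
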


\begin{proof}
For $B_n$ as in Theorem \ref{thm2.1vdv08a}, set $\Theta_n=\{ z^{(w)}:w\in B_n \}.$ We need to verify the conditions of Theorem \ref{thm2.1ghosal01}. Denote
\begin{equation*}
c_g=\frac{\overline{g}^2}{\kappa}+\frac{\overline{g}^3}{\kappa^2},
\end{equation*}
and let the constant $C>1$ from Theorem \ref{thm2.1vdv08a} be large enough, so that
\begin{equation*}
\frac{1}{ 4 c_g } \leq\frac{ C }{ 4 c_g } - 4.
\end{equation*}
Let $\hat{\varepsilon}_n$ be a sequence of positive numbers satisfying the conditions of Theorem \ref{thm2.1vdv08a}. Take $\overline{\varepsilon}_n=3\kappa^{-1/2}\overline{g}\hat{\varepsilon}_n.$ By Lemma \ref{lemma1} (i) and by inequality \eqref{w1},
\begin{equation*}
\log N(\overline{\varepsilon}_n,\Theta_n,\rho) \leq \log N(3\hat{\varepsilon}_n,B_n,\|\cdot\|_{\infty}) \leq 6 C n\hat{\varepsilon}_n^2=\frac{2\kappa C}{3\overline{g}^2} n\overline{\varepsilon}_n^2,
\end{equation*}
which verifies \eqref{c1} for the constant $c_1=2\kappa C/(3\overline{g}^2).$ Furthermore,
for $n$ large enough, so that $\widetilde{\varepsilon}_n$ is small, Lemma \ref{lemma1} (ii)--(iii) yields that
\begin{equation*}
\left\{ z^{(w)} \in\Theta: \mathrm{KL}(\pp_{\lambda_0},\pp_{z^{(w)}})\leq \widetilde{\varepsilon}_n^2 , \mathrm{V} ( \pp_{\lambda_0},\pp_{z^{(w)}} ) \leq \widetilde{\varepsilon}_n^2  \right\} \supset \{ w: c_g \| w - w_0 \|_{\infty}^2 \leq \widetilde{\varepsilon}_n^2 \}.
\end{equation*}
Set $\widetilde{\varepsilon}_n= 2 \sqrt{c_g} \hat{\varepsilon}_n.$ It follows from the above display and \eqref{w3} that
\begin{align*}
\Pi\left( z^{(w)}\in\Theta: \mathrm{KL}(\pp_{\lambda_0},\pp_{z^{(w)}})\leq \widetilde{\varepsilon}_n^2 , \mathrm{V} ( \pp_{\lambda_0},\pp_{z^{(w)}} ) \leq \widetilde{\varepsilon}_n^2 \right) & \geq \widetilde{\mathbb{P}} ( \| W - w_0 \|_{\infty} < 2 \hat{\varepsilon}_n ) \\
& \geq e ^{-n\hat{\varepsilon}_n^2}\\
& = \exp \left( - \frac{1}{4 c_g}n\widetilde{\varepsilon}_n^2\right).
\end{align*}
This verifies \eqref{c3} for $c_4=1$ and $c_2\geq 1/(4c_g).$ Finally, by \eqref{w2},
\begin{equation*}
\Pi( \Theta\setminus \Theta_n ) = \widetilde{\mathbb{P}}(W\notin B_n) \leq e^{-C n\hat{\varepsilon}_n^2}=\exp\left( -\frac{C}{4c_g}n\widetilde{\varepsilon}_n^2 \right).
\end{equation*}
This verifies \eqref{c2} for $c_3=1$ and $c_2 \leq C/(4 c_g)-4.$ Theorem \ref{thm2.1ghosal01} then yields the posterior contraction rate $\varepsilon_n= \max(\overline{\varepsilon}_n,\widetilde{\varepsilon}_n).$ Since both $\overline{\varepsilon}_n$ and $\widetilde{\varepsilon}_n$ are proportional to $\hat{\varepsilon}_n,$ we can simply take $\varepsilon_n=\hat{\varepsilon}_n$ and absorb the constants in the constant $M$ in the statement of Theorem \ref{thm2.1ghosal01}. This completes the proof.
\end{proof}

\begin{rem}
\label{rem1} An assumption made in Theorem \ref{mainthm} that the constant $\kappa$ is also a lower bound for the true intensity function $\lambda_0$ appears artifical, for it seems to require some knowledge of the unknown function $\lambda_0.$ However, there is a simple fix to that: all one has to do is to add independent realisations $Y_1,\ldots,Y_n$ of a Poisson point process $Y$ with intensity $\kappa\mu$ to original observations $X_1,\ldots,X_n.$ The new observations $X_i^{\prime}=X_i+Y_i$ will be again realisations of a Poisson point process, but now with the intensity function $\kappa+\lambda_0.$ Inferential conclusions drawn on $\kappa+\lambda_0$ in this new model can then be directly transformed into conclusions on $\lambda_0.$ \qed
\end{rem}

\begin{rem}
\label{rem1a} The assumptions on the function $g$ from Theorem \ref{mainthm} are sufficient, but not necessary. For instance, the conclusions of Theorem \ref{mainthm} also hold true for the function $g(x)=\kappa+|x|$ that is Lipschitz (with constant $\overline{g}=1$), but not invertible. This requires certain modification of the arguments in the proof. Assume that $\lambda_0=\kappa+\overline{\lambda}_0$ for a function $\overline{\lambda}_0\geq 0$ contained in the support of $W$. Then for $w_0$ in Theorem \ref{thm2.1vdv08a} taken to be $\overline{\lambda}_0$, equations \eqref{w1}--\eqref{w3} hold for some sieves $B_n$. Define sieves $\Theta_n$ as in the proof of Theorem \ref{mainthm}: $\Theta_n=\{ z^{(w)}:w\in B_n \}.$ Then condition \eqref{c1} can be verified as in the proof of Theorem \ref{mainthm}. To show \eqref{c2}, it suffices to observe that $\Pi( \Theta\setminus \Theta_n ) \leq \widetilde{\mathbb{P}}(W\notin B_n)$. Finally, \eqref{c3} follows as in the proof of Theorem \ref{mainthm}. \qed
\end{rem}

\begin{rem}
\label{rem2}
Motivated by applications of the so-called log-Gaussian Cox processes, one could have argued that a reasonable prior $\Pi$ for $\lambda_0$ would have been the process $Z^{(W)}=(Z^{(W)}(x):x\in\mathcal{X})$ defined through
\begin{equation*}
Z^{(W)}(x)=g(W(x))=e^{W(x)}, \quad x\in\mathcal{X}.
\end{equation*}
This transforms $W$ into a process $Z^{(W)}$ with strictly positive sample paths. However, the function $g(x)=e^x$ does not satisfy assumptions of our theorem. Examination of the proof of Lemma \ref{lemma1} shows that in this case there does not seem to exist a good way to control the probability divergences in the statement of Theorem \ref{thm2.1ghosal01} in terms of the $\|\cdot\|_{\infty}$-distance. This then does not permit to invoke Theorem \ref{thm2.1vdv08a} in order to derive the posterior contraction rate. The possibility that for such a prior the posterior contracts at a suboptimal rate (in a sense that there exists some other prior, for which the posterior contracts at a faster rate $\epsilon_n^{\prime};$ cf.\ Remark \ref{remrate}) is not to be discarded.  \qed
\end{rem}

\begin{rem}
\label{rem3a}
Practical implementation of the non-parametric Bayesian approach lies outside the scope of the present work. We only remark that provided the function $g$ is not only Lipschitz, but also bounded, our approach can be implemented along the lines in \cite{adams09}; see especially Section 5.3 there. A typical choice of the function $g$ could be e.g.\ a minor variation on the logistic function,
\begin{equation*}
g(x)=\kappa+ \frac{g^{\ast}}{1+e^{-x}},
\end{equation*}
where $g^{\ast}>0$ is a constant. In a practical implementation one could take $\kappa=0.$
\qed
\end{rem}

\begin{rem}
\label{rem3}
An interesting statistical problem related to the one we are considering in this note is non-parametric estimation of the intensity function of a cyclic Poisson point processes   (i.e.\ Poisson point processes with periodic intensity functions) over $\mathcal{X}=[0,T]$. A recent reference dealing with estimation of the unknown period in this model is \cite{belitser13}. \qed
\end{rem}

\begin{rem}
\label{remharry}
After the completion of this work, we learnt about a related paper \cite{vz13}, that deals with non-parametric Bayesian estimation of the periodic intensity function of an inhomogeneous Poisson process on the real line. The main theorems in that paper are of the same nature as ours, but here we want to underline some essential differences. Firstly, our approach also covers the estimation of spatial point processes. In our analysis we assume that i.i.d. copies of a Poisson point process are at our disposal, whereas in \cite{vz13} this assumption has not been made. However, the assumption of periodicity of the intensity function is almost the same as having i.i.d.\ observations on a time interval $[0,T]$, where $T$ is the period of the intensity function. So, essentially our results are applicable to a wider class of point processes. On the other hand, \cite{vz13} also analyse the case of discrete observations and for this case they give results on the estimation of integrated intensity functions, instead of the intensity functions themselves. Studying integrated intensity functions originates from the fact that the intensities themselves are not identifiable under a discrete time observations regime. Another difference between their paper and ours is in the choice of the prior: \cite{vz13} work with a free-knot spline prior, while ours is based on a fixed transformation of a Gaussian process. More subtle differences can be observed by comparing the two papers in detail. See also the PhD thesis \cite{serra13}, that contains some further related results. \qed
\end{rem}

\begin{rem}
\label{remalice}
Another related paper of whose existence we became aware of is \cite{kirichenko15}. It deals with derivation of the posterior contraction rate for the method in \cite{adams09}, and its results are closely related to ours. The most important difference between our work and \cite{kirichenko15} is that in Theorem \ref{mainthm} and Remark \ref{rem1a} we allow a non-smooth link function $g$, whereas a logistic function is used in \cite{kirichenko15}. \qed
\end{rem}

\section{Examples of the prior}
\label{priors}

In this section we consider two concrete examples of the prior and compute the posterior contraction rate for them explicitly. We will restrict our attention to the case $\mathcal{X}=[0,1]^d,$ which is a practically relevant one. The results in this section are consequences of Theorem \ref{mainthm} and probabilistic results on the behaviour of various Gaussian processes.

For a multi-index $\alpha=(\alpha_1,\ldots,\alpha_d)$ we let $|\alpha|=\sum_i \alpha_i$ and introduce the partial derivative operator
\begin{equation*}
D^{\alpha}=\frac{\partial^{|\alpha|}}{\partial x_1^{\alpha_1} \cdots \partial x_d^{\alpha_d} }.
\end{equation*}
We will call a continuous function $\lambda:\mathcal{X}\rightarrow\mathbb{R}$ $\beta$-H\"older regular for $\beta\geq 1,$ if $D^{\alpha} \lambda$ is continuous for any $|\alpha|\leq\beta.$ We will denote the space of $\beta$-H\"older-regular functions by $\mathcal{C}^{\beta}(\mathcal{X})$ (equipped with the uniform norm $\|\cdot\|_{\infty}$). Furthermore, $\mathcal{C}(\mathcal{X})$ will denote the space of continuous functions on $\mathcal{X}$ (equipped with the uniform norm $\|\cdot\|_{\infty}$).

First let $d=1.$ 

\begin{exam}
\label{example1}
Let $\overline{W}=(\overline{W}(x):x\in\mathcal{X})$ be a standard Brownian motion over the time interval $\mathcal{X}=[0,1]$ and let $\eta_0,\eta_1,\ldots,\eta_{\beta}$ be standard normal random variables. Assume that $\eta_0,\eta_1,\ldots,\eta_{\beta},\overline{W}$ are independent. The modified Riemann-Liouville process $W=(W(x):x\in\mathcal{X})$ with Hurst parameter $\beta>0$ is defined as
\begin{equation*}
W(x)=\sum_{k=0}^{\beta}\eta_k x^k + \int_0^x (x-y)^{\beta-1/2}\mathrm{d}\overline{W}_y, \quad x\in\mathcal{X},
\end{equation*}
see Section 4.2 in \cite{vdv08a}. Our prior $\Pi$ will be the law of the process $Z^{(W)}=(Z^{(W)}(x):x\in\mathcal{X})$ defined by \eqref{pz}. By Theorem 4.3 in \cite{vdv08a}, the support of $W$ is the whole space $\mathcal{C}(\mathcal{X}),$ and if $w_0\in\mathcal{C}^{\beta}(\mathcal{X}),$ then $\phi_{w_0}(\varepsilon)\asymp \varepsilon^{-1/\beta}$ as $\varepsilon\downarrow 0.$ It then follows from Theorem \ref{mainthm} by solving inequality \eqref{phi} (cf.\ \cite{vdv08a}, pp.\ 1449--1450) that the posterior contracts at the rate $n^{-\beta/(2\beta+1)}.$ This is the minimax estimation rate for a $\beta$-H\"older-regular function in a variety of non-parametric estimation problems. See in particular Theorem 6.5 in \cite{kutoyants98} for the Poisson point processes setting. The rate $n^{-\beta/(2\beta+1)}$ can thus be thought of as an optimal posterior contraction rate in this particular setting. \qed
\end{exam}

Now we consider the general case $d\geq 1$ and simultaneously illustrate the fact that the statement of Theorem \ref{mainthm} holds not only for Gaussian processes, but also for certain conditionally Gaussian processes.

\begin{exam}
\label{example2}
Let $W=(W(x):x\in\mathbb{R}^d)$ be a centred homogeneous Gaussian random field satisfying the properties listed in Theorem 3.1 in \cite{vdv09}. Consider a positive random variable $A$ defined on the same probability space as $W$ and independent of it, and assume that its distribution satisfies the requirement in formula (3.4) in \cite{vdv09}. In fact, for concreteness we take $A$ such that $A^d$ possesses a Gamma distribution (this is a popular choice in practical implementations of Bayesian procedures in various statistical problems). Now consider the restriction $W^A$ of the rescaled process $x\mapsto W(Ax)$ to $\mathcal{X}.$ Conditional on $A,$ the process $W^A$ is again a Gaussian process. It is shown in Theorem 3.1 in \cite{vdv09} that for all $n$ large enough, provided $w_0\in\mathcal{C}^{\beta}(\mathcal{X}),$ there exist Borel subsets $B_n$ of $\mathcal{C}(\mathcal{X}),$ such that the inequalities
\begin{align}
\log N({\varepsilon}_{n,1},B_n,\|\cdot\|_{\infty}) & \leq n{\varepsilon}_{n,1}^2,\label{w1b}\\
\widetilde{\mathbb{P}}(W^A\notin B_n) & \leq e^{- 4 n{\varepsilon}_{n,2}^2},\label{w2b}\\
\widetilde{\mathbb{P}} ( \| W^A - {w}_0 \|_{\infty} \leq {\varepsilon}_{n,2} ) & \geq e ^{-n{\varepsilon}_{n,2}^2},\label{w3b}
\end{align}
hold, where
\begin{align*}
\varepsilon_{n,1} &= K \varepsilon_{n,2} (\log n)^{\kappa_2}, \quad \kappa_2 =\frac{1+d}{2},\\
\varepsilon_{n,2} &= n^{-\beta/(2\beta+d)} (\log n)^{\kappa_1}, \quad \kappa_1=\frac{1+d}{2+d/\beta},
\end{align*}
and $K$ is a sufficiently large constant. Our prior $\Pi$ will be the law of the process $Z^{(W^A)}=( Z^{(W^A)}(x):x\in\mathcal{X} ),$ where $Z^{(W^A)}(x)=g( W^A(x) ).$ The inequalities \eqref{w1b}--\eqref{w3b} are analogues of those in Theorem \ref{thm2.1vdv08a}. Also an analogue of \eqref{cf} is satisfied thanks to formula (5.1) in
\cite{vdv09} and subsequent arguments there.

As the proof of Theorem \ref{mainthm} depends only on the inequalities \eqref{w1b}--\eqref{w3b}, a minor modification of the arguments shows that the posterior in this case contracts at the rate
\begin{equation}
\label{rt2}
\varepsilon_n=n^{-\beta/(2\beta+d)} (\log n)^{(1+d)(4\beta+d)/(4\beta+2d)}.
\end{equation}
In fact, the following choice of the quantities involved in the proof of Theorem \ref{mainthm} works:
\begin{gather*}
\overline{\varepsilon}_n=\frac{ \overline{g} }{ \sqrt{\kappa} } \varepsilon_{n,1}, \quad \widetilde{\varepsilon}_{n,2}=\sqrt{c_g}\varepsilon_{n,2},\\
c_1=\frac{\kappa}{\overline{g}^2}, \quad \frac{1}{c_g}\leq c_2 \leq \frac{4}{c_g}-4, \quad c_3=1, \quad c_4=1.
\end{gather*}

The posterior contraction rate in \eqref{rt2} differs from the minimax estimation rate in Theorem 6.5 in \cite{kutoyants98} only in a logarithmic factor, which is unimportant for all practical purposes. Whether the logarithmic loss is an artifact of a specific proof of the result in \cite{vdv09} studying the properties of the rescaled process $W^A,$ or is essential, is not entirely clear. More important, however, is the fact that if $g$ is such that $g^{-1}$ is infinitely many times differentiable with suitably bounded derivatives, our arguments show that even without knowing the true smoothness order $\beta$ of $\lambda_0,$ the Bayesian approach will automatically attain the optimal posterior contraction rate (up to a logarithmic factor): the prior $\Pi$ is constructed in a way not utilising information on $\beta.$ In other words, the Bayesian approach is adaptive  in this case. This is in contrast to the kernel method from \cite{kutoyants98}, which requires knowledge of $\beta$ for an optimal selection of the smoothing parameter. \qed
\end{exam}

The two examples of Gaussian process based priors we considered in this section obviously do not exhaust all possible examples where our results are applicable. For instance, we mention the fact after another slight modification of the statement of Theorem 3 one can also cover the case of Gaussian tensor-product spline priors studied in \cite{dejonge12}. Again, up to a logarithmic factor, the posterior will contract at the optimal rate. We omit the details.

\appendix
\section{  }
\label{appendix}

\begin{proof}[Proof of Lemma \ref{lemma1}]
Part (i) follows from part (ii) and the well-known inequality
\begin{equation*}
h^2(\mathbb{P}_{\lambda_1},\mathbb{P}_{\lambda_2}) \leq \mathrm{KL}(\mathbb{P}_{\lambda_1},\mathbb{P}_{\lambda_2})
\end{equation*}
between the squared Hellinger distance and the Kullback-Leibler divergence.

We prove part (ii). Using Proposition 6.11 in \cite{karr86} or Theorem 1.3 from \cite{kutoyants98}, as well as  Lemma 1.1 there, we have
\begin{equation}
\label{kl1}
\mathrm{KL}(\mathbb{P}_{\lambda_1},\mathbb{P}_{\lambda_2})=\int_{ \mathcal{X} } \lambda_1(x) \log \left( \frac{\lambda_1(x)}{\lambda_2(x)} \right) \mathrm{d}\mu(x) - \int_{ \mathcal{X} } \left\{\frac{\lambda_1(x)}{\lambda_2(x)}-1\right\}\lambda_2(x)\mathrm{d}\mu(x).
\end{equation}
Now since $\log(1+x)\leq x$ for $x>-1,$ we get that
\begin{align*}
\mathrm{KL}(\mathbb{P}_{\lambda_1},\mathbb{P}_{\lambda_2}) & \leq \int_{ \mathcal{X} } [ \lambda_1(x) - \lambda_2(x) ]^2\frac{1}{\lambda_2(x)}\mathrm{d}\mu(x)\\
& \leq \frac{1}{\kappa} \int_{ \mathcal{X} } [ \lambda_1(x) - \lambda_2(x) ]^2\mathrm{d}\mu(x)\\
& \leq \frac{1}{\kappa} \| \lambda_1 - \lambda_2 \|_{\infty}^2\\
& \leq \frac{\overline{g}^2}{\kappa} \| w - v \|_{\infty}^2.
\end{align*}
This proves part (ii). Here we also see the role of the constant $\kappa>0.$

We prove part (iii). Letting $U \sim \mathbb{P}_{\lambda_1}$ and denoting by $\ex_{{\lambda_1}}[\cdot]$ the expectation under $\mathbb{P}_{\lambda_1},$ we have
\begin{equation}
\label{v1}
\mathrm{V}(\mathbb{P}_{\lambda_1},\mathbb{P}_{\lambda_2}) = \ex_{{\lambda_1}}\left[ \log^2 \left( \frac{\mathrm{d}\mathbb{P}_{\lambda_1}}{\mathrm{d}\mathbb{P}_{\lambda_2}}(U) \right)  \right] - \mathrm{KL}^2(\mathbb{P}_{\lambda_1},\mathbb{P}_{\lambda_2}).
\end{equation}
Again, using Proposition 6.11 in \cite{karr86} or Theorem 1.3 from \cite{kutoyants98}, as well as  Lemma 1.1 there, combined with formula \eqref{kl1} above, after some long and uninspiring computations we get from \eqref{v1} that
\begin{align*}
\mathrm{V}(\mathbb{P}_{\lambda_1},\mathbb{P}_{\lambda_2}) & =  \int_{ \mathcal{X} } \lambda_1(x) \log^2 \left( \frac{\lambda_1(x)}{\lambda_2(x)} \right) \mathrm{d}\mu(x)  \\
&=\int_{ \lambda_1<\lambda_2 } \lambda_1(x) \log^2 \left( \frac{\lambda_1(x)}{\lambda_2(x)} \right) \mathrm{d}\mu(x)\\
& + \int_{ \lambda_1>\lambda_2 } \lambda_1(x) \log^2 \left( \frac{\lambda_1(x)}{\lambda_2(x)} \right) \mathrm{d}\mu(x)\\
&=\mathrm{I}_1+\mathrm{I}_2,
\end{align*}
with an obvious definition of $\mathrm{I}_1$ and $\mathrm{I}_2.$ Recall the elementary inequality
\begin{equation*}
\frac{x}{1+x}\leq \log(1+x)\leq x, \quad x>-1.
\end{equation*}
This inequality gives that on the set $\{\lambda_1<\lambda_2\},$
\begin{equation*}
\log^2 \left( \frac{\lambda_1(x)}{\lambda_2(x)} \right) \leq \frac{1}{\lambda_1^2(x)}[ \lambda_1(x)-\lambda_2(x) ]^2.
\end{equation*}
Hence
\begin{equation*}
\mathrm{I}_1 \leq \frac{1}{\kappa} \| \lambda_1-\lambda_2 \|_{\infty}^2\leq\frac{\overline{g}^2}{\kappa} \| w-v \|_{\infty}^2.
\end{equation*}
On the other hand, on the set $\{\lambda_1>\lambda_2\},$
\begin{equation*}
\log^2 \left( \frac{\lambda_1(x)}{\lambda_2(x)} \right) \leq \left( \frac{\lambda_1(x)}{\lambda_2(x)} - 1\right)^2.
\end{equation*}
Therefore,
\begin{align*}
\mathrm{I}_2 & \leq \int_{ \lambda_1>\lambda_2 } [ \lambda_1(x) - \lambda_2(x)  ]^2 \frac{\lambda_1(x)}{\lambda_2^2(x)} \mathrm{d}\mu(x)\\
& = \int_{ \lambda_1>\lambda_2 } [ \lambda_1(x) - \lambda_2(x)  ]^3 \frac{1}{\lambda_2^2(x)} \mathrm{d}\mu(x)\\
&+\int_{ \lambda_1>\lambda_2 } [ \lambda_1(x) - \lambda_2(x)  ]^2\frac{1}{\lambda_2(x)} \mathrm{d}\mu(x)\\
&\leq \frac{1}{\kappa^2} \|\lambda_1-\lambda_2\|^3_{\infty}+\frac{1}{\kappa} \|\lambda_1-\lambda_2\|^2_{\infty}\\
&\leq \frac{\overline{g}^2}{\kappa} \|w-v\|_{\infty}^2\left(1+\frac{\overline{g}}{\kappa} \|w-v\|_{\infty} \right).
\end{align*}
This completes the proof of part (iii) and hence of the lemma too.
\end{proof}

\bibliographystyle{plainnat}

\end{document}